\theoremstyle{plain}
\newtheorem{theorem}{Theorem}[section]
\newtheorem{corollary}[theorem]{Corollary}
\newtheorem{proposition}[theorem]{Proposition}
\newtheorem{lemma}[theorem]{Lemma}
\theoremstyle{definition}
\newtheorem{example}[theorem]{Example}
\newtheorem{definition}[theorem]{Definition}
\newtheorem{notation}[theorem]{Notation}
\newcommand{\bN}{{\mathbb{N}}}
\newcommand{\bbN}{\mathbb{N}}
\newcommand{\bT}{{\mathbb{T}}}
  \newcommand{\E}{{\mathcal{E}}}
\newcommand{\al}{\alpha}
\renewcommand{\phi}{\varphi}
\newcommand{\upchi}{{\raise.35ex\hbox{\ensuremath{\chi}}}}
\newcommand{\spn}{\operatorname{span}}
\newcommand{\ca}{\mathrm{C}^*}
\newcommand{\ol}{\overline}
\newcommand{\susbeteq}{\subseteq} 
\begin{document}
\title{Regular ideals of graph algebras}
\author[J.H. Brown]{Jonathan H. Brown}
\address[J.H. Brown]{
Department of Mathematics\\
University of Dayton\\
300 College Park Dayton\\
OH 45469-2316 U.S.A.} \email{jonathan.henry.brown@gmail.com}

\author[A.H. Fuller]{Adam H. Fuller}
\address[A.H. Fuller]{
Department of Mathematics\\
Ohio University\\
Athens\\
OH 45701 U.S.A.}
\email{fullera@ohio.edu}

\author[D.R. Pitts]{David R. Pitts}\thanks{This work was supported by a grants from the Simons Foundation (DRP \#316952, SAR \#36563); and by the American Institute of Mathematics SQuaREs Program.}  \address[D.R. Pitts]{
  Department of Mathematics\\
  University of Nebraska-Lincoln\\
  Lincoln\\
  NE 68588-0130 U.S.A.}  \email{dpitts2@unl.edu}

\author[S.A. Reznikoff]{Sarah A. Reznikoff}
\address[S.A. Reznikoff]{
Department of Mathematics\\
Kansas State University\\
138 Cardwell Hall\\
Manhattan, KS, U.S.A. }
\email{sarahrez@math.ksu.edu}

\date{\today}

\begin{abstract}
Let $C^*(E)$ be the graph $\ca$-algebra of a row-finite graph $E$.
We give a complete description of the vertex sets of the gauge-invariant regular ideals of $C^*(E)$.
It is shown that when $E$ satisfies Condition~(L) the regular ideals $C^*(E)$ are a class of gauge-invariant ideals which preserve Condition~(L) under quotients.
That is, we show that if $E$ satisfies Condition~(L) then a regular ideal $J \unlhd C^*(E)$ is necessarily gauge-invariant.
Further, if $J \unlhd C^*(E)$ is a regular ideal, it is shown that $C^*(E)/J \simeq C^*(F)$ where $F$ satisfies Condition~(L).
\end{abstract}

\maketitle

\section{Introduction}
In this short note we study the regular ideals, in the sense of Hamana \cite{Ham1982}, of a row-finite graph $\ca$-algebra $C^*(E)$.
That is the ideals $J$ such that $J=J^{\perp\perp}$, see Definition~\ref{def: reg}.

Let $E$ be a row-finite directed graph.
There is a natural one-to-one correspondence between the saturated, hereditary subsets of vertices of $E$ and the gauge-invariant ideals of $C^*(E)$ \cite{BPRS2000}.
In Proposition~\ref{prop: gauge inv reg descrip} we give a complete description of the gauge-invariant regular ideals $J$ of $C^*(E)$ in terms of their corresponding vertex sets $H(J) = \{v \in E^0 \colon p_v \in J\}$.
Using this information we move to study quotients of graph C$^*$-algebras by gauge-invariant regular ideals.

Two important classes of directed graphs are those satisfying
Condition~(K) and those satisfying Condition~(L).  
Condition~(L) was introduced in \cite{KPR1998} to prove the
Cuntz-Krieger Uniqueness Theorem for graph $\ca$-algebras, and serves
as an analogue of Condition~(I) from \cite{CunKri1980}.  Condition~(K)
was introduced in \cite{KPRR1997} to study the ideal structure of
graph algebras, and serves as an analogue of Condition~(II) for
Cuntz-Krieger algebras introduced in \cite{Cuntz1981}.
Our interest in this note is with Condition~(L), but it provides useful context to briefly discuss the relationship between quotients and Condition~(K).

$C^*$-algebras of  graphs  satisfying Condition~(K) are 
well-behaved under quotients by gauge-invariant ideals.  Indeed, when
$E$ satisfies Condition~(K) and $J \unlhd C^*(E)$ is a gauge-invariant
ideal, then there is a graph $F$ satisfying Condition~(K) such that
$C^*(E)/J \simeq C^*(F)$.   However,  in general, this is not true of graphs satisfying
Condition~(L).  If $E$ satisfies Condition~(L) and $J \unlhd C^*(E)$
is gauge-invariant, then $C^*(E)/J$ is again a graph $\ca$-algebra,
but it may be impossible to find a graph $F$ satisfying Condition~(L)
such that $C^*(E)/J\simeq C^*(F)$, \cite[Remark~4.5]{BPRS2000}.

The poor behavior of Condition~(L) under quotients by gauge-invariant
ideals can be mitigated with the imposition of an additional
hypothesis: regularity of the ideal $J$.
We show in Theorem~\ref{thm: regular quotient type L}
that if $E$ satisfies Condition~(L) and $J \unlhd C^*(E)$ is a
regular, gauge-invariant ideal then $C^*(E)/J \simeq C^*(F)$, where
$F$ is a graph satisfying Condition~(L).

As an application of Theorem~\ref{thm: regular
  quotient type L}, Proposition~\ref{prop: regular is gauge
  inv} shows that when the graph $E$ satisfies Condition~(L),  all regular ideals of $C^*(E)$ are gauge-invariant.  Thus, if $E$ satisfies Condition~(L),
 the regular ideals of $C^*(E)$ are a class of gauge-invariant
ideals which preserve Condition~(L) under quotients.  However, the
converse need not hold.  Indeed, Example~\ref{ex1} gives
an example of a graph satisfying Condition~(L) and a non-regular ideal
$J$ in $C^*(E)$ for which $C^*(E)/J\simeq C^*(F)$ where $F$ is a graph
also satisfying 
Condition~(L).

\section{Background: graph algebras and their ideals}
We recall some definitions  relevant to graph $\ca$-algebras.
We will follow the arrow conventions used in Raeburn's monograph \cite{RaeGraphAlg}, and refer the reader to \cite{RaeGraphAlg} for further details on graph $\ca$-algebras.

Let $E = (E^0,E^1,r,s)$ be a directed graph.  We will assume
throughout that $E$ is a \emph{row-finite} graph.  That is, we will
assume $r^{-1}(v)$ is a finite set for each vertex $v\in E^0$.  For
$N\in \bbN$, a \textit{path of length $N$} in $E$ is a
sequence of edges $\{\alpha_i\}_{i=1}^N$ such that
$s(\alpha_i)=r(\alpha_{i+1})$.
For a path $\alpha$, let
$\alpha^0:=\{r(\alpha_i)\}\cup\{s(\alpha)_i\}$ be the set of vertices
in the path $\alpha$.  Let $E^*$ be the collection of all vertices
of $E$ and all finite paths of $E$.  We extend the notion of source to
elements of $E^*$ by setting
$s(\alpha_1, \alpha_2, \ldots \alpha_n)=s(\alpha_n)$ and $s(v)=v$ when
$v \in E^0$; we extend the notion of range to all paths in the
analogous way.  A path
$\alpha=\alpha_1, \alpha_2, \ldots \alpha_n \in E^* \setminus E^0$ is
a {\em cycle} if $r(\alpha)=s(\alpha)$.

Fix a universal Cuntz-Krieger $E$-family, $\{s_e,\ p_v \colon e \in
E^1,\ v \in E^0\}$.  Then the  
 graph $\ca$-algebra $C^*(E)$ is the $\ca$-algebra generated by $\{s_e, p_v\}$.
For $\alpha = \{\alpha_i\}_{i=1}^N\in E^*$, denote by $s_\al$ the partial isometry
$$ s_\al = s_{\alpha_1} \ldots s_{\alpha_n}.$$ 
We have
\[
p_vs_\alpha=\delta_{v,r(\alpha)} s_\alpha \quad\text{ and }\quad s_\alpha p_w=\delta_{s(\alpha),w}s_\alpha;
\]
in particular $p_vs_\alpha\neq 0$ if and only if $v=r(\alpha)$.
The universality of  $\{s_e,\ p_v \colon e \in E^1,\ v \in
E^0\}$ implies that for $z\in \bT$ the map
$s_e\mapsto zs_e$ induces an action $\gamma$ of $\bT$ on C$^*(E)$,
called the {\em gauge action}.

Let $D = \ol{\spn}\{s_\al s_\al^* \colon \al \in E^*\}$. 
Then $D$ is an abelian C$^*$-subalgebra of $C^*(E)$.
We refer to $D$ as the \emph{diagonal of $\ca(E)$}.\footnote{This terminology has become standard even though $D$ is a $C^*$-diagonal in the sense of Kumjian~\cite{Kum1986} only if $E$ has no cycles.}

\begin{definition}
  The graph $E$ satisfies \emph{Condition~(L)} if every cycle has an
  entry: that is, for every cycle
  $\alpha=\{\alpha_i\}_{i=1}^n \in E^*$, there exists an $i$ and an
  edge $e \in E^1 \setminus \{\alpha_i\}$ such that
  $r(e)=r(\alpha_i)$.
\end{definition}

A particularly nice property of a graph algebra is that a large class of its ideals can be `seen' by looking at the underlying graph.
We recall now the requisite definitions.

\begin{definition}
Let $E$ be a row-finite graph and let $H \subseteq E^0$.
The set $H$ is \emph{hereditary} if whenever $\mu\in E^*$ satisfies
$r(\mu)\in H$, then $s(\mu)\in H$.

The set $H$ is \emph{saturated} if  for all $v\in E^0$, 
$ \{s(e) \colon e\in E^1, \ r(e)=v\} \subseteq H$ implies $v\in H$.
\end{definition}

\begin{notation}
Let $J$ be an ideal in a graph algebra $C^*(E)$.
Let
$$ H(J) := \{ v \in E^0 \colon p_v \in J \}. $$
If $H \subseteq E^0$ let $I(H)$ be the ideal in $C^*(E)$ generated by $H$, that is, 
$$ I(H) :=\overline{\spn}\{s^{}_\alpha s_\beta^*: \alpha, \beta\in E^*
\text{ and } s(\alpha)=s(\beta)\in H\}.$$
\end{notation}

The following results give a one-to-one correspondence between saturated hereditary subsets of $E^0$ and gauge-invariant ideals of $C^*(E)$.
Further, a quotient of $C^*(E)$ by a gauge-invariant ideal results in another graph $\ca$-algebra.

\begin{theorem}[c.f. {\cite[Theorem~4.1]{BPRS2000}}]\label{thm: gauge inv ideals}
Let $E$ be a row-finite graph.
If $H$ is a subset of $E^0$ then $H(I(H))=H$ if and only if $H$ is saturated and hereditary.

If $J$ is an ideal in $C^*(E)$ then $I(H(J))=J$ if and only if $J$ is gauge-invariant.
In particular, for any ideal $J$, $I(H(J))$ is the largest gauge-invariant ideal contained in $J$.
That is, 
$$ I(H(J))= \bigcap_{z\in\bT} \gamma_z(J). $$
\end{theorem}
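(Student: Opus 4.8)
My plan is to realize both composite identities through the quotient-graph construction, with the gauge-invariant uniqueness theorem of \cite{BPRS2000} supplying the single nontrivial injectivity. I would begin with the routine structural facts. For any ideal $J$, the set $H(J)$ is saturated and hereditary: heredity holds because $r(\mu)\in H(J)$ gives $s_\mu^* p_{r(\mu)} s_\mu = p_{s(\mu)}\in J$, while saturation follows from $p_v=\sum_{r(e)=v}s_es_e^*$, which forces $p_v\in J$ once every $p_{s(e)}\in J$. Dually, each $I(H)$ is an ideal, it is gauge-invariant because $\gamma_z(s_\al s_\beta^*)=z^{|\al|-|\beta|}s_\al s_\beta^*$ stays in the same spanning set, and $H\mapsto I(H)$ is plainly monotone. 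Finally $I(H(J))\subseteq J$ always holds, since any generator $s_\al s_\beta^*=s_\al p_w s_\beta^*$ with $w=s(\al)=s(\beta)\in H(J)$ already lies in $J$.

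The crux is the identification $C^*(E)/I(H)\cong C^*(E\setminus H)$ for saturated and hereditary $H$, where $E\setminus H$ has vertices $E^0\setminus H$ and edges $\{e\in E^1\colon s(e)\notin H\}$ (heredity forces these edges to have range outside $H$ as well, so $E\setminus H$ is again row-finite). I would exhibit two mutually inverse $*$-homomorphisms. First, the quotient images $\{s_e+I(H),\,p_v+I(H)\}$ form a Cuntz--Krieger $(E\setminus H)$-family: the only relation needing attention is (CK2), where terms $s_es_e^*$ with $s(e)\in H$ vanish in the quotient, and saturation is exactly what prevents a non-source $v\notin H$ from losing all of its edges, so the relation persists. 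Universality then yields $\phi\colon C^*(E\setminus H)\to C^*(E)/I(H)$. Second, sending $p_v\mapsto q_v$ and $s_e\mapsto t_e$ for objects outside $H$ and to $0$ otherwise defines a Cuntz--Krieger $E$-family in the universal algebra $C^*(E\setminus H)$ (heredity handling the range relation, saturation handling (CK2)); this kills $I(H)$ and descends to the inverse of $\phi$. Since the universal generators $q_v$ are nonzero, $p_v\notin I(H)$ for $v\notin H$, giving $H(I(H))=H$. The converse half of the first assertion is immediate: if $H(I(H))=H$, then $H$ inherits saturation and heredity from $H(I(H))$.

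For the second assertion, let $J$ be gauge-invariant and set $H:=H(J)$. As $I(H)\subseteq J$, there is a canonical surjection $C^*(E)/I(H)\twoheadrightarrow C^*(E)/J$; composing with the isomorphism above produces a surjection $\tilde\rho\colon C^*(E\setminus H)\to C^*(E)/J$. This map is equivariant for the gauge actions -- the gauge action descends to $C^*(E)/J$ precisely because $J$ is gauge-invariant -- and it carries each vertex projection to $p_v+J$, which is nonzero exactly because $v\notin H(J)$. The gauge-invariant uniqueness theorem \cite{BPRS2000} then forces $\tilde\rho$ to be injective, so the surjection $C^*(E)/I(H)\to C^*(E)/J$ has trivial kernel and $J=I(H)=I(H(J))$. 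The reverse implication is trivial, since every $I(H)$ is gauge-invariant. This is the step I expect to be the genuine obstacle: the rest is bookkeeping with the Cuntz--Krieger relations, whereas here one really needs the uniqueness theorem to promote the surjection to an isomorphism.

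The final assertion is now formal. The ideal $I(H(J))$ is gauge-invariant and contained in $J$. If $K\unlhd C^*(E)$ is any gauge-invariant ideal with $K\subseteq J$, then $H(K)\subseteq H(J)$, so by the second assertion together with monotonicity $K=I(H(K))\subseteq I(H(J))$; thus $I(H(J))$ is the largest gauge-invariant ideal contained in $J$. On the other hand $\bigcap_{z\in\bT}\gamma_z(J)$ is visibly gauge-invariant, is contained in $J$, and contains every gauge-invariant $K\subseteq J$ (since $K=\gamma_z(K)\subseteq\gamma_z(J)$ for all $z$); hence it too is the largest such ideal. As two largest elements must agree, $I(H(J))=\bigcap_{z\in\bT}\gamma_z(J)$.
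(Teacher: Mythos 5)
Your proposal is correct. Note, though, that the paper does not prove this theorem at all: it is quoted as background, with the proof deferred to \cite{BPRS2000}, so there is no internal argument to compare against. Your reconstruction is essentially the standard (and the cited source's) argument — identify $C^*(E)/I(H)\cong C^*(E\setminus H)$ via two universal-property maps, then apply the gauge-invariant uniqueness theorem to the induced surjection $C^*(E\setminus H)\to C^*(E)/J$ — and all the key points (saturation guaranteeing the Cuntz--Krieger relation survives the quotient, heredity guaranteeing the range relation, non-vanishing of $p_v+J$ for $v\notin H(J)$, and the lattice argument identifying $I(H(J))$ with $\bigcap_{z\in\bT}\gamma_z(J)$) are handled correctly.
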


For any row-finite graph $E$ and closed ideal $J\subseteq C^*(E)$,
denote by $E/J$ the largest subgraph of $E$ with no vertex
belonging to 
$H(J)$, that is, 
\[E/J:= (E^0 \backslash H(J), E^1\backslash s^{-1}(H(J)), r, s).\]

\begin{proposition}[c.f.\ {\cite[Theorem~4.1]{BPRS2000}}]\label{prop: quotient graph}
Let $J$ be a gauge-invariant ideal of $C^*(E)$.
Then $C^*(E)/J \simeq C^*(E/J)$.
\end{proposition}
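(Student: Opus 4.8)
The plan is to exhibit an explicit isomorphism using the universal property of $C^*(E/J)$ together with the gauge-invariant uniqueness theorem. Write $H := H(J)$. Since $J$ is gauge-invariant, Theorem~\ref{thm: gauge inv ideals} gives $I(H)=J$, tells us that $H$ is saturated and hereditary, and identifies $\{v : p_v \in J\}$ with $H$. Let $q \colon C^*(E) \to C^*(E)/J$ be the quotient map, and set $P_v := q(p_v)$ for $v \in E^0 \setminus H$ and $S_e := q(s_e)$ for $e \in E^1 \setminus s^{-1}(H)$. Heredity of $H$ ensures that every retained edge also has its range outside $H$ (if $s(e)\notin H$ then $r(e)\notin H$), so $E/J$ is genuinely a subgraph on $E^0\setminus H$ and each $S_e$, $P_{r(e)}$, $P_{s(e)}$ is defined.

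First I would verify that $\{S_e, P_v\}$ is a Cuntz--Krieger $(E/J)$-family in $C^*(E)/J$ whose vertex projections are all nonzero. Nonvanishing of $P_v$ for $v\notin H$ is immediate, since $v\notin H$ means $p_v \notin J$. The relation $S_e^* S_e = P_{s(e)}$ follows by applying $q$ to $s_e^* s_e = p_{s(e)}$. For the summation relation, note that whenever $s(e)\in H$ one has $s_e = s_e p_{s(e)} \in J$ because $p_{s(e)}\in J$, hence $q(s_e)=0$; thus applying $q$ to $p_v = \sum_{r(e)=v} s_e s_e^*$ (valid when $r^{-1}(v)\neq\varnothing$) annihilates exactly the summands with $s(e)\in H$ and leaves $P_v = \sum_{\{e:\,r(e)=v,\ s(e)\notin H\}} S_e S_e^*$, which is precisely the relation in $E/J$.

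The step requiring care — and where I expect the one genuine subtlety to lie — is matching the \emph{set} of vertices at which the summation relation is imposed. A vertex $v\notin H$ with $r^{-1}(v)\neq\varnothing$ in $E$ could a priori lose all its edges in $E/J$, making it a spurious new source and breaking the correspondence. Saturation of $H$ is exactly what rules this out: if every edge $e$ with $r(e)=v$ satisfied $s(e)\in H$, then $\{s(e):r(e)=v\}\subseteq H$ would force $v\in H$, contrary to $v\notin H$. Hence $v$ receives edges in $E/J$ if and only if it does in $E$, and the relation transfers at exactly the correct vertices.

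Having checked the family, I would invoke the universal property of $C^*(E/J)$ to obtain a $*$-homomorphism $\pi \colon C^*(E/J)\to C^*(E)/J$ sending the canonical generators to $S_e$ and $P_v$. Surjectivity is clear, since the only nonzero images of the generators of $C^*(E)$ under $q$ are the $P_v$ ($v\notin H$) and the $S_e$ ($s(e)\notin H$), the rest being killed as above. For injectivity I would apply the gauge-invariant uniqueness theorem \cite{RaeGraphAlg}: because $J$ is gauge-invariant, $\gamma$ descends to a gauge action $\bar\gamma$ on $C^*(E)/J$ with $\bar\gamma_z(S_e)=zS_e$ and $\bar\gamma_z(P_v)=P_v$ for $z\in\bT$, so $\pi$ intertwines the canonical gauge action on $C^*(E/J)$ with $\bar\gamma$; as every $P_v$ is nonzero, the theorem yields injectivity. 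Therefore $\pi$ is the desired isomorphism $C^*(E)/J \simeq C^*(E/J)$.
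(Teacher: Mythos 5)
Your proof is correct, but note that the paper itself offers no argument for this proposition: it is stated as a citation (c.f.\ Theorem~4.1 of \cite{BPRS2000}), so the real comparison is with the proof in that reference --- and your argument is essentially that standard one. You construct the Cuntz--Krieger $(E/J)$-family $\{S_e,P_v\}$ in $C^*(E)/J$, observe that the generators killed by $q$ are exactly $p_v$ with $v\in H$ and $s_e$ with $s(e)\in H$ (via $s_e=s_ep_{s(e)}$), check that the Cuntz--Krieger relations descend, and then combine the universal property of $C^*(E/J)$ (for existence and surjectivity of $\pi$) with the gauge-invariant uniqueness theorem of \cite{RaeGraphAlg} (for injectivity, using $P_v\neq 0$ for $v\notin H$ and the descended gauge action). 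Your isolation of where heredity and saturation of $H=H(J)$ enter is exactly right: heredity makes $E/J$ a genuine subgraph, and saturation prevents a vertex outside $H$ from becoming a spurious source in $E/J$, which is what keeps the summation relation imposed at precisely the correct set of vertices; both properties of $H(J)$ are supplied by Theorem~\ref{thm: gauge inv ideals}. The only detail worth adding for completeness is that the induced action $\bar\gamma$ on $C^*(E)/J$ is strongly continuous --- this follows from $\left\| \bar\gamma_z(q(a))-q(a)\right\| \le \left\| \gamma_z(a)-a\right\|$ --- since the gauge-invariant uniqueness theorem requires a continuous action; this is a routine remark, not a gap.
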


We will use the following proposition to produce quotient graphs without Condition~(L).

\begin{proposition}\label{prop: quotient type L}
Let $J$ be an ideal in $C^*(E)$.
If the graph $E/J$ satisfies Condition~(L), then $J$ is gauge-invariant.
\end{proposition}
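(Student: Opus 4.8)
The plan is to prove the equivalent statement that $J = I(H(J))$. First I would record that for \emph{any} ideal $J$ the vertex set $H(J)$ is saturated and hereditary: if $r(\mu)\in H(J)$ then $s_\mu = p_{r(\mu)}s_\mu \in J$, so $p_{s(\mu)} = s_\mu^* s_\mu \in J$ and $s(\mu)\in H(J)$ (heredity), while if $\{s(e): r(e)=v\}\subseteq H(J)$ then each $s_e s_e^* = s_e p_{s(e)}s_e^*\in J$ and the Cuntz--Krieger relation $p_v = \sum_{r(e)=v}s_e s_e^*$ gives $p_v\in J$, i.e. $v\in H(J)$ (saturation). By Theorem~\ref{thm: gauge inv ideals}, $I(H(J))$ is the largest gauge-invariant ideal contained in $J$, and $J$ is gauge-invariant exactly when $J=I(H(J))$. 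Since the inclusion $I(H(J))\subseteq J$ always holds, it suffices to establish the reverse inclusion.

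Next I would pass to the quotient by $K:=I(H(J))$. Because $H(J)$ is saturated and hereditary, Theorem~\ref{thm: gauge inv ideals} gives $H(K)=H(J)$, so that $E/K = E/J$, and Proposition~\ref{prop: quotient graph} provides an isomorphism $C^*(E)/K \simeq C^*(E/J)$ under which each coset $p_v+K$ with $v\notin H(J)$ corresponds to the generating projection $p_v$ of $C^*(E/J)$; these projections are nonzero precisely because $H(K)=H(J)$. Let $\bar J$ denote the image of $J$ in $C^*(E/J)$ under this isomorphism. Then $\bar J$ is an ideal of $C^*(E/J)$, and $J=K$ if and only if $\bar J = 0$. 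I would then compute $H(\bar J)$: if some $v\in (E/J)^0 = E^0\setminus H(J)$ had $p_v\in\bar J$, then $p_v$ would lie in $J+K = J$, forcing $v\in H(J)$, a contradiction. Hence no vertex projection of $E/J$ belongs to $\bar J$, so $H(\bar J)=\varnothing$.

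Finally the hypothesis enters. Consider the quotient homomorphism $\pi\colon C^*(E/J)\to C^*(E/J)/\bar J$; since $H(\bar J)=\varnothing$, it sends every generating projection $p_v$ to the nonzero element $p_v+\bar J$. As $E/J$ satisfies Condition~(L), the Cuntz--Krieger Uniqueness Theorem \cite{KPR1998} applies, and a homomorphism that is nonzero on each vertex projection must be injective; therefore $\bar J = \ker\pi = 0$. This gives $J = K = I(H(J))$, so $J$ is gauge-invariant. I expect the main obstacle to be organizational rather than computational: the conclusion cannot be reached by a direct gauge-equivariance calculation on $J$, and the essential move is to recast ``$J$ is gauge-invariant'' as the \emph{faithfulness} of the quotient map $\pi$, which is exactly the form in which Condition~(L) can be exploited through the Cuntz--Krieger Uniqueness Theorem.
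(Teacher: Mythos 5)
Your proof is correct and follows essentially the same route as the paper's: pass to $C^*(E)/I(H(J)) \simeq C^*(E/J)$, observe that the image $\bar J$ of $J$ kills no vertex projection, and invoke the Cuntz--Krieger Uniqueness Theorem under Condition~(L) to conclude $\bar J = 0$, i.e. $J = I(H(J))$. The only difference is that you spell out details the paper leaves implicit (that $H(J)$ is saturated and hereditary for an arbitrary ideal, and why $H(\bar J)=\varnothing$), which is a welcome but not substantively different elaboration.
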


\begin{proof}
  Let $F=E/J$.
By Proposition~\ref{prop: quotient graph}, we may identify
$C^*(E)/I(H(J))$ with $C^*(F)$.
By definition, $I(H(J))\subseteq J$. Let $N\unlhd C^*(F)$
be the image of $J$ under the quotient map $C^*(E)\rightarrow
C^*(E)/I(H(I))= C^*(F)$.
Then
$$ C^*(E)/J \simeq C^*(F) / N. $$ 

Consider the quotient map $q: C^*(F) \rightarrow C^*(F) / N$. Note that $H(N) \subseteq F^0$ is empty and thus   $q(p_v) \neq 0$  for all $v \in F^0$. 
Since $F$ satisfies Condition~(L), by the Cuntz-Krieger Uniqueness Theorem the quotient map  is injective.  Thus $N$ is trivial, so $J=I(H(J))$.
\end{proof}

\section{Regular ideals and quotients}
Let $A$ be a C$^*$-algebra.
For a subset $X \subseteq A$ we define $X^\perp$ to be the set
$$ X^\perp = \{a \in A \colon ax= xa = 0 \text{ for all }x\in X\}. $$

\begin{definition}\label{def: reg}
We call an ideal $J\unlhd A$ a \emph{regular ideal} if $J = J^{\perp\perp}$.
\end{definition}

Note that, if $J$ is an ideal in $A$, then so is $J^\perp$.
It is always the case that $J \subseteq (J^\perp)^\perp$ and $J^\perp = J^{\perp\perp\perp}$.

In this section we will study the regular ideals of a graph C$^*$-algebras $C^*(E)$.
The main result is Theorem~\ref{thm: regular quotient type L}, which
shows that if $E$ satisfies Condition~(L) and $J$ is a regular,
gauge-invariant ideal of $C^*(E)$, then $E/J$ also satisfies
Condition~(L) and $C^*(E)/J \simeq C^*(E/J)$.  En route
to this result we give a description in  Proposition~\ref{prop: gauge inv reg descrip} of the vertex set $H(J)$ for a
regular ideal $J$.
As a consequence of Theorem~\ref{thm: regular quotient type L} we also
show that when $E$ satisfies Condition~(L) all regular ideals of
$C^*(E)$ are gauge-invariant.  Thus the regular ideals form a class of
gauge-invariant ideals which preserve Condition~(L) under quotients.

\begin{lemma}\label{lem: perp gauge inv}
Let $J$ be a gauge-invariant ideal of a graph algebra $C^*(E)$.
Then $J^\perp$ is a gauge-invariant regular ideal.
\end{lemma}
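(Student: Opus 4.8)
The plan is to separate the statement into its two assertions—that $J^\perp$ is \emph{regular}, and that $J^\perp$ is \emph{gauge-invariant}—and to observe that the first is automatic, requiring no hypothesis on $J$. Indeed, the remark following Definition~\ref{def: reg} records that $J^\perp$ is again an ideal and that $J^\perp = J^{\perp\perp\perp}$ holds for every ideal $J$. I would simply apply this identity with $J^\perp$ in place of $J$: since $(J^\perp)^{\perp\perp} = J^{\perp\perp\perp} = J^\perp$, the ideal $J^\perp$ satisfies $J^\perp = (J^\perp)^{\perp\perp}$ and is therefore regular. Note that gauge-invariance of $J$ plays no role in this part.

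The content of the lemma thus lies entirely in the gauge-invariance of $J^\perp$, and the key observation is that any $*$-automorphism $\phi$ of $C^*(E)$ commutes with the formation of annihilators, in the sense that $\phi(J^\perp) = \phi(J)^\perp$. I would verify this directly from the definition of $X^\perp$: because $\phi$ is a bijective $*$-homomorphism, the conditions $ax = xa = 0$ for all $x \in J$ hold if and only if $\phi(a)\phi(x) = \phi(x)\phi(a) = 0$ for all $x \in J$, and as $x$ ranges over $J$ the element $\phi(x)$ ranges over all of $\phi(J)$. Hence $a \in J^\perp$ if and only if $\phi(a) \in \phi(J)^\perp$, which is precisely the asserted identity.

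Applying this with $\phi = \gamma_z$ for each $z \in \bT$, and using that $J$ is gauge-invariant so that $\gamma_z(J) = J$, I would conclude $\gamma_z(J^\perp) = \gamma_z(J)^\perp = J^\perp$ for every $z \in \bT$; thus $J^\perp$ is gauge-invariant, completing the proof. I do not anticipate a genuine obstacle here, since both parts are formal manipulations. The only point worth keeping in mind is that $J^\perp$ must genuinely be a closed two-sided ideal for the phrase ``regular ideal'' to apply and for the annihilator identity above to make sense—but this is exactly what is recorded in the remark immediately after Definition~\ref{def: reg}.
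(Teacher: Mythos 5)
Your proof is correct and matches the paper's approach: the paper likewise treats regularity as automatic from the identity $J^\perp = J^{\perp\perp\perp}$ noted after Definition~\ref{def: reg}, and proves gauge-invariance by checking that each $\gamma_z$ carries $J^\perp$ into itself using $\gamma_z(J)=J$, which is exactly your annihilator computation specialized to $\phi=\gamma_z$.
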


\begin{proof}
If $a \in J^\perp$ then for any $z\in \bT$ 
$$ \gamma_z(a) J = \gamma_z( aJ) = \{0\} $$
and
$$ J\gamma_z(a) = \gamma_z( Ja) = \{0\}. $$
Hence $\gamma_z(a) \in J^\perp$.
\end{proof}

\begin{notation}  The following notation will be useful for describing the vertex set
$H(J)$ of a gauge-invariant, regular $J$. 
\begin{enumerate}
\item For $w\in E^0$, put
$$ T(w) = \{s(\alpha) \colon \alpha \in E^*,\ r(\alpha) =w\}.$$
 \item If $I \susbeteq C^*(E)$ an ideal, let $\overline{H}(I)\subseteq
   E^0$ be the set
$$ \ol{H}(I) = \{ r(\alpha): \alpha\in E^* \text{ and }  s(\alpha)\in H(I)\}.$$
\end{enumerate}
\end{notation}

\begin{proposition}\label{prop: gauge inv reg descrip}
Let $E$ be a row-finite directed graph.
Let $J \subseteq C^*(E)$ be a gauge-invariant ideal.
Then
\begin{enumerate}
\item $J^\perp = I(E^0\backslash \ol{H}(J))$;
\item $J^{\perp\perp} = I(\{w \in E^0 \colon T(w) \subseteq \ol{H}(J)\})$;
\item $J$ is regular if and only if $H(J) = \{w \in E^0 \colon T(w) \subseteq \ol{H}(J)\}$.
\end{enumerate}
\end{proposition}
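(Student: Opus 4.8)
The plan is to reduce the whole proposition to a computation of vertex sets, exploiting that every ideal appearing is gauge-invariant. First I would record that $J^\perp$ is gauge-invariant by Lemma~\ref{lem: perp gauge inv}, and that applying the same lemma to $J^\perp$ shows $J^{\perp\perp}=(J^\perp)^\perp$ is gauge-invariant as well. By Theorem~\ref{thm: gauge inv ideals}, each gauge-invariant ideal equals $I$ of its own (saturated hereditary) vertex set, and two gauge-invariant ideals coincide exactly when those vertex sets agree. Consequently it suffices to identify $H(J^\perp)$ and $H(J^{\perp\perp})$, i.e.\ to decide, for each $v\in E^0$, whether $p_v$ lies in the ideal under consideration.

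For (i), I would compute $H(J^\perp)$ directly. Since $J$ is gauge-invariant, $J=I(H(J))$ is the closed span of the elements $s_\alpha s_\beta^*$ with $s(\alpha)=s(\beta)\in H(J)$. As $p_v=p_v^*$ and $J$ is self-adjoint, $p_v\in J^\perp$ iff $p_vJ=\{0\}$, and by continuity this holds iff $p_v s_\alpha s_\beta^*=0$ for every such generator. Using $p_v s_\alpha=\delta_{v,r(\alpha)}s_\alpha$ together with $s_\alpha s_\beta^*\neq 0$ whenever $s(\alpha)=s(\beta)$, this is equivalent to the statement that no path $\alpha$ with $s(\alpha)\in H(J)$ has $r(\alpha)=v$; that is, $v\notin\ol{H}(J)$. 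Thus $H(J^\perp)=E^0\setminus\ol{H}(J)$, and the correspondence yields $J^\perp=I(E^0\setminus\ol{H}(J))$.

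For (ii), I would simply iterate (i), applying it to the gauge-invariant ideal $J^\perp$ in place of $J$ to get $J^{\perp\perp}=(J^\perp)^\perp=I(E^0\setminus\ol{H}(J^\perp))$. It then remains to unwind $\ol{H}(J^\perp)$ using $H(J^\perp)=E^0\setminus\ol{H}(J)$ from (i): a vertex $w$ lies in $\ol{H}(J^\perp)$ precisely when some path $\alpha$ with $r(\alpha)=w$ satisfies $s(\alpha)\in H(J^\perp)=E^0\setminus\ol{H}(J)$, and since $T(w)$ is exactly the set of sources of paths ranging at $w$, this says $T(w)\not\subseteq\ol{H}(J)$. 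Complementing gives $E^0\setminus\ol{H}(J^\perp)=\{w\in E^0:T(w)\subseteq\ol{H}(J)\}$, which is the claimed description of $J^{\perp\perp}$. Part (iii) is then immediate: by definition $J$ is regular iff $J=J^{\perp\perp}$, and since both are gauge-invariant, Theorem~\ref{thm: gauge inv ideals} says they agree iff their vertex sets do, namely iff $H(J)=\{w\in E^0:T(w)\subseteq\ol{H}(J)\}$.

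The bulk of the argument is combinatorial path-reachability, so I do not expect serious difficulty; the single point demanding care is the passage in (ii) from the annihilator vertex set $\ol{H}(J^\perp)$ to a reachability condition on $T(w)$, where one must correctly complement and read ``$w$ reaches a vertex outside $\ol{H}(J)$'' as ``$T(w)\not\subseteq\ol{H}(J)$.'' The conceptual engine throughout is that gauge-invariance (Lemma~\ref{lem: perp gauge inv}) permits replacing each ideal by its vertex set and testing membership on the projections $p_v$ alone, so that the nonobvious operator-algebraic content collapses to bookkeeping about paths in $E$.
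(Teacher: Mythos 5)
Your proof is correct and takes essentially the same route as the paper: part (i) is proved by the identical computation of $H(J^\perp)$ (testing $p_v$ against the spanning generators $s_\alpha s_\beta^*$ of the gauge-invariant ideal $J$, using Lemma~\ref{lem: perp gauge inv} and Theorem~\ref{thm: gauge inv ideals} to reduce everything to vertex sets). Your derivations of (ii) by iterating (i) on $J^\perp$, and of (iii) by comparing vertex sets of gauge-invariant ideals, are precisely the details the paper compresses into the sentence ``The other properties follow from (i).''
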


\begin{proof}
By Lemma~\ref{lem: perp gauge inv}, $J^\perp$ is gauge-invariant.
Hence by Theorem~\ref{thm: gauge inv ideals} $J^\perp = I(H(J^\perp))$.
Thus, to prove (i), it suffices to prove $H(J^\perp) = E^0\backslash \ol{H}(J)$.

Observe that if $v\in H(J)$ and $s(\alpha) = v$ for some $\alpha\in E^*$
then $s_\alpha \in J$.
Also if $\al \in E^*$ then $p_w s_\al \neq 0$ if and only if
$r(\al) = w$.
Hence, if $w\in \ol{H}(J)$, then there exists
$s_\alpha\in J$ such that $p_w s_\al \neq 0$.  Thus
$p_w \notin J^\perp$ and therefore $w\notin H(J^\perp)$.  Conversely
if $w\notin \ol{H}(J)$, then $p_w s_\al=0$ for all $\al\in E^*$ with
$s(\al)\in H(J)$.  Since $J$ is gauge invariant, 
$J=\overline{\spn}\{s^{}_\al s_\beta^*: s(\alpha)=s(\beta)\in H(J)\}$,
so  $p_w\in J^\perp$. Hence $w\in H(J^\perp)$.

The other properties follow from (i).
\end{proof}

We show now that quotients by \emph{regular} ideals  preserve Condition~(L).

\begin{theorem}\label{thm: regular quotient type L}
Let $E$ be a row-finite graph satisfying Condition~(L).
Let $J$ be a regular, gauge-invariant ideal in $C^*(E)$.
Then $E/J$ satisfies Condition~(L).
\end{theorem}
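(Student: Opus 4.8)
The plan is to argue by contradiction. Suppose $E/J$ fails Condition~(L), so it contains a cycle $\alpha = \alpha_1\cdots\alpha_n$ with no entry \emph{within} $E/J$. Since every vertex and edge of $E/J$ is a vertex/edge of $E$, this $\alpha$ is also a cycle in $E$; because $E$ itself satisfies Condition~(L), $\alpha$ must have an entry $e \in E^1$ in $E$, say $e \neq \alpha_i$ with $r(e)=r(\alpha_i)=:w$. As $e$ is not an entry of $\alpha$ in $E/J$, it cannot survive in $E/J$, and by the definition of $E/J$ this forces $s(e)\in H(J)$. The goal will then be to derive $w \in H(J)$, which is absurd since $w=r(\alpha_i)$ is a vertex of $E/J$ and hence lies in $E^0\setminus H(J)$.

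To reach $w\in H(J)$ I would invoke the regularity criterion Proposition~\ref{prop: gauge inv reg descrip}(iii): since $J=J^{\perp\perp}$, we have $w\in H(J)$ as soon as $T(w)\subseteq \overline{H}(J)$. So the heart of the argument is to show $T(w)\subseteq\overline{H}(J)$, i.e.\ that every vertex reachable from $w$ can in turn reach $H(J)$. First I would record two structural facts. (a) Every cycle vertex lies in $\overline{H}(J)$: from any cycle vertex one travels around $\alpha$ to $w$ and then along $e$ to $s(e)\in H(J)$, exhibiting a path from that vertex into $H(J)$. (b) At each cycle vertex the ``no entry in $E/J$'' hypothesis controls the incoming edges: any $f\in E^1$ whose range is a cycle vertex $v_j$ is \emph{either} one of the cycle edges $\alpha_j$ (whose source is again a cycle vertex, hence outside $H(J)$) \emph{or} differs from $\alpha_j$ while sharing its range, making it an entry of $\alpha$ in $E$ that does not survive in $E/J$, which forces $s(f)\in H(J)$.

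Now take $v\in T(w)$, witnessed by a path $\mu$ with $r(\mu)=w$ and $s(\mu)=v$, and read $\mu$ from its range end. By fact (b), each successive edge of $\mu$ landing on a cycle vertex is either a cycle edge---in which case we remain on the cycle---or has source in $H(J)$. If $\mu$ uses only cycle edges then $v$ is a cycle vertex and $v\in\overline{H}(J)$ by (a). Otherwise, at the first non-cycle edge $\mu_k$ we have $s(\mu_k)\in H(J)$; since $H(J)$ is hereditary (a property of the vertex set of any ideal, as $r(\mu)\in H(J)$ gives $p_{s(\mu)}=s_\mu^*s_\mu\in J$), the remaining vertices of $\mu$, and in particular $v$, lie in $H(J)\subseteq\overline{H}(J)$. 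Either way $v\in\overline{H}(J)$, so $T(w)\subseteq\overline{H}(J)$, and the regularity criterion yields the contradiction $w\in H(J)$.

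The step I expect to be most delicate is the bookkeeping in fact (b) combined with the induction along $\mu$: one must apply ``no entry in $E/J$'' correctly at a cycle vertex (allowing for the possibility that $\alpha$ is not simple and revisits vertices, so that a vertex may carry several cycle edges), and must use heredity of $H(J)$ to propagate membership in $H(J)$ all the way down $\mu$ to $v$. The remaining ingredients---Condition~(L) for $E$ supplying the entry $e$, fact (a) placing the cycle vertices in $\overline{H}(J)$, and Proposition~\ref{prop: gauge inv reg descrip}(iii) converting $T(w)\subseteq\overline{H}(J)$ into $w\in H(J)$---are then immediate.
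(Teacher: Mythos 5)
Your proof is correct and follows essentially the same route as the paper's: the entrances of the cycle must have sources in $H(J)$, every cycle vertex therefore lies in $\overline{H}(J)$, and Proposition~\ref{prop: gauge inv reg descrip}(iii) then forces the cycle vertices into $H(J)$, contradicting their membership in $E/J$. In fact, your fact (b) together with the induction along $\mu$ (using heredity of $H(J)$) supplies the verification that $T(w)\subseteq\overline{H}(J)$ --- a step the paper's proof leaves implicit when it passes from $\lambda^0\subseteq\overline{H}(J)$ directly to $\lambda^0\subseteq H(J)$ --- so your write-up is, if anything, more complete at exactly the point you flagged as delicate.
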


\begin{proof}
Let $\lambda$ be a cycle in $E/J$.
Since $E$ has Condition~(L), 
\[\E:= \{ e \in E^1 \colon e \text{ is an entrance for }\lambda\}\neq \emptyset.\]
If $\lambda$ has no entry in $F$ then $s(\E) \subseteq H(J)$.
Hence $\lambda^0 \subseteq \ol{H}(J)$.
Since $J$ is regular, it follows from Proposition~\ref{prop: gauge inv reg descrip}, that $\lambda^0 \subseteq H(J)$.
This contradicts $\lambda$ being a cycle in $E/J$. 
Hence $E/J$ has Condition~(L).
\end{proof}

In general, a regular ideal of a graph $\ca$-algebra need not be gauge-invariant.
Indeed, if $E$ is a graph with a single vertex and a single edge then $C^*(E) = C(\bT)$.
The $\ca$-algebra $C(\bT)$ does not contain any non-trivial gauge-invariant ideals.
However, $C(\bT)$ contains many regular ideals.
We will see now that if $E$ satisfies Condition~(L), then regular ideals of $C^*(E)$ are necessarily gauge-invariant.

\begin{lemma}\label{lem: regular support regular}
If $J$ is a regular ideal, then $I(H(J)) \subseteq J$ is a gauge-invariant regular ideal.
\end{lemma}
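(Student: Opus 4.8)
The plan is to reduce the regularity of $K := I(H(J))$ to the combinatorial criterion in Proposition~\ref{prop: gauge inv reg descrip}(iii). Since $K$ is visibly gauge-invariant (it is the closed span of the $s_\alpha s_\beta^*$ with $s(\alpha)=s(\beta)\in H(J)$, and also equals $\bigcap_{z\in\bT}\gamma_z(J)$), and the containment $K\subseteq J$ is immediate, only regularity needs proof. First I would record that $H(J)$ is saturated and hereditary for \emph{any} ideal $J$: hereditarity follows from $s_\mu=p_{r(\mu)}s_\mu$ and $p_{s(\mu)}=s_\mu^*s_\mu$, while saturation follows from the Cuntz-Krieger relation $p_v=\sum_{r(e)=v}s_es_e^*$. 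Consequently Theorem~\ref{thm: gauge inv ideals} gives $H(K)=H(I(H(J)))=H(J)$, and hence $\ol H(K)=\ol H(J)$ as well. By Proposition~\ref{prop: gauge inv reg descrip}(iii) applied to the gauge-invariant ideal $K$, it then suffices to prove the set equality
\[ H(J) = \{w\in E^0 \colon T(w)\subseteq \ol H(J)\}. \]

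The inclusion $\subseteq$ is automatic and requires no regularity: if $w\in H(J)$ and $r(\alpha)=w$, then hereditarity of $H(J)$ forces $s(\alpha)\in H(J)\subseteq\ol H(J)$, so $T(w)\subseteq\ol H(J)$. The reverse inclusion is where regularity of $J$ enters, and it is the main point. I would fix $w$ with $T(w)\subseteq\ol H(J)$ and aim to show $p_w\in J^{\perp\perp}=J$, i.e.\ that $p_w$ annihilates $J^\perp$ on both sides. The obstacle is that $J$ need not be gauge-invariant, so one cannot read off $H(J^\perp)$ from Proposition~\ref{prop: gauge inv reg descrip}(i) (indeed, for $C(\bT)$ that formula already fails). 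The device that circumvents this is to trap $J^\perp$ inside a gauge-invariant ideal that we \emph{can} analyze: since $K=I(H(J))\subseteq J$, taking perps gives $J^\perp\subseteq K^\perp$, and because $K$ is gauge-invariant, Proposition~\ref{prop: gauge inv reg descrip}(i) yields $K^\perp=I(E^0\backslash\ol H(J))$.

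It then remains to observe the elementary fact that $p_w\, I(E^0\backslash\ol H(J))=0$ precisely when there is no path $\alpha$ with $r(\alpha)=w$ and $s(\alpha)\notin\ol H(J)$ — that is, precisely when $T(w)\subseteq\ol H(J)$. Thus the hypothesis on $w$ gives $p_w K^\perp=0$, whence $p_w J^\perp=0$; taking adjoints (both $p_w$ and the ideal $J^\perp$ are self-adjoint) yields $J^\perp p_w=0$ as well, so $p_w\in J^{\perp\perp}=J$ and $w\in H(J)$. This establishes the displayed equality and hence, via Proposition~\ref{prop: gauge inv reg descrip}(iii), the regularity of $K=I(H(J))$. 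The genuinely subtle step is the one just described: recognizing that although $H(J^\perp)$ is not governed by the gauge-invariant formula, the containment $J^\perp\subseteq K^\perp$ together with the explicit description of the gauge-invariant $K^\perp$ is enough to certify that $p_w$ kills $J^\perp$.
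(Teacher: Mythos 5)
Your proof is correct, but it takes a genuinely different route from the paper's. The paper's argument is purely lattice-theoretic and takes three lines: from $I(H(J)) \subseteq J$, monotonicity of the double annihilator and regularity of $J$ give $I(H(J)) \subseteq I(H(J))^{\perp\perp} \subseteq J^{\perp\perp} = J$; since $I(H(J))^{\perp\perp}$ is gauge-invariant (Lemma~\ref{lem: perp gauge inv} applied twice) and Theorem~\ref{thm: gauge inv ideals} says $I(H(J))$ is the \emph{largest} gauge-invariant ideal contained in $J$, the middle term collapses onto $I(H(J))$, which is therefore regular. You instead verify the combinatorial criterion of Proposition~\ref{prop: gauge inv reg descrip}(iii) for $K = I(H(J))$ directly, the key step being that $T(w) \subseteq \ol{H}(J)$ forces $p_w$ to annihilate $K^\perp = I(E^0\setminus \ol{H}(J)) \supseteq J^\perp$, whence $p_w \in J^{\perp\perp} = J$; all the individual steps (hereditarity of $H(J)$, the identification $K^\perp = I(E^0\setminus\ol{H}(J))$ via Proposition~\ref{prop: gauge inv reg descrip}(i), the adjoint argument) check out. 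What each approach buys: the paper's proof is shorter and needs no graph combinatorics at all, only the maximality of the gauge-invariant core and gauge-invariance of annihilators. Yours is longer but produces a by-product the paper's proof does not: the vertex-set identity $H(J) = \{w \in E^0 \colon T(w) \subseteq \ol{H}(J)\}$ holds for \emph{every} regular ideal $J$, gauge-invariant or not, extending the ``only if'' half of Proposition~\ref{prop: gauge inv reg descrip}(iii) beyond the gauge-invariant case; and the trapping device $J^\perp \subseteq I(H(J))^\perp$ is a reusable way to control the annihilator of a non-gauge-invariant ideal. One small simplification available to you: $H(K) = H(J)$ follows immediately from $K = \bigcap_{z\in\bT}\gamma_z(J)$ together with $\gamma_z(p_v) = p_v$, which bypasses the saturation argument entirely (and sidesteps the vacuous case of the paper's definition of ``saturated'' at vertices receiving no edges, which must be read in the standard way, exempting such vertices, for the claim ``$H(J)$ is saturated for any ideal $J$'' to be literally true).
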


\begin{proof}
We have that $I(H(J)) \subseteq J$ and
$$ I(H(J)) \subseteq I(H(J))^{\perp\perp} \subseteq J^{\perp\perp} = J. $$
As $I(H(J))$ is gauge-invariant, $I(H(J))^\perp$ and $I(H(J))^{\perp\perp}$ are gauge-invariant by Lemma~\ref{lem: perp gauge inv}.
By Theorem~\ref{thm: gauge inv ideals}, $I(H(J))$ is the largest gauge-invariant ideal in $J$.
It follows that $I(H(J))= I(H(J))^{\perp\perp}$. 
Hence $I(H(J))$ is regular.
\end{proof}

\begin{proposition}\label{prop: regular is gauge inv}
If $E$ is a row-finite graph satisfying Condition~(L), and $J$ is a regular ideal in $C^*(E)$, then $J$ is gauge-invariant.
\end{proposition}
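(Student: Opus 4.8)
The plan is to reduce everything to Proposition~\ref{prop: quotient type L}, which says that if the quotient graph $E/J$ satisfies Condition~(L) then $J$ is automatically gauge-invariant. Thus the real goal becomes to show that $E/J$ satisfies Condition~(L). Since $J$ itself is not assumed gauge-invariant, I cannot apply Theorem~\ref{thm: regular quotient type L} to $J$ directly; the key idea is to replace $J$ by the largest gauge-invariant ideal it contains, namely $K := I(H(J))$, to which the previous results do apply.

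First I would observe that because $J$ is regular, Lemma~\ref{lem: regular support regular} gives that $K = I(H(J))$ is a gauge-invariant \emph{regular} ideal contained in $J$. Now $E$ satisfies Condition~(L) and $K$ is regular and gauge-invariant, so Theorem~\ref{thm: regular quotient type L} applies to $K$ and yields that the quotient graph $E/K$ satisfies Condition~(L).

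Next I would identify $E/K$ with $E/J$. Both quotient graphs are determined solely by their deleted vertex sets $H(K)$ and $H(J)$, so it suffices to verify $H(K) = H(J)$. The inclusion $K \subseteq J$ gives $H(K) \subseteq H(J)$ at once. For the reverse inclusion, for each $v \in H(J)$ the choice $\alpha = \beta = v$ in the spanning set defining $I(H(J))$ exhibits $p_v \in I(H(J)) = K$, so $v \in H(K)$; hence $H(J) \subseteq H(K)$. (Alternatively, one invokes that $H(J)$ is saturated and hereditary together with Theorem~\ref{thm: gauge inv ideals} to get $H(K) = H(I(H(J))) = H(J)$.) Consequently $E/J = E/K$ satisfies Condition~(L).

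Finally, with $E/J$ satisfying Condition~(L), Proposition~\ref{prop: quotient type L} immediately forces $J$ to be gauge-invariant, completing the argument. The only substantive step is the initial reduction---recognizing that passing to $K = I(H(J))$ lets Theorem~\ref{thm: regular quotient type L} deliver Condition~(L) for the \emph{same} quotient graph $E/J$; everything afterward is bookkeeping on vertex sets. I would expect the identification $H(I(H(J))) = H(J)$ to be the one place deserving a moment's care, since it is precisely what glues the conclusion of Theorem~\ref{thm: regular quotient type L} to the hypothesis of Proposition~\ref{prop: quotient type L}.
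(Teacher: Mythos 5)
Your proposal is correct and follows essentially the same route as the paper: pass to $K = I(H(J))$, apply Lemma~\ref{lem: regular support regular} to get that $K$ is a regular gauge-invariant ideal, apply Theorem~\ref{thm: regular quotient type L} to $K$, and finish with Proposition~\ref{prop: quotient type L}. The only difference is that you spell out the identification $E/K = E/J$ (via $H(I(H(J))) = H(J)$), which the paper leaves implicit; that is a worthwhile clarification, not a deviation.
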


\begin{proof}
Since $J$ is regular, $I(H(J))$ is regular by Lemma~\ref{lem: regular support regular}.
Thus, by Theorem~\ref{thm: regular quotient type L}, $E/J$ has Condition~(L).
It follows that $J$ is gauge-invariant by Proposition~\ref{prop: quotient type L}.
\end{proof}

Proposition~\ref{prop: regular is gauge inv} and Theorem~\ref{thm: regular quotient type L} together give the following corollary.

\begin{corollary}
Let $E$ be a row-finite graph satisfying Condition~(L).
Let $J$ be a regular ideal in $C^*(E)$.
Then $E/J$ satisfies Condition~(L) and $C^*(E)/J \simeq C^*(E/J)$.
\end{corollary}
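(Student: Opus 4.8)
The plan is to assemble the conclusion from results already in hand; no new machinery is needed, since the genuine work has been front-loaded into the preceding propositions. The one substantive move is to promote the regular ideal $J$ to a gauge-invariant one. Because $E$ satisfies Condition~(L) and $J$ is regular, Proposition~\ref{prop: regular is gauge inv} applies verbatim and yields that $J$ is gauge-invariant. This is the only point at which Condition~(L) on $E$ is used, and it is the crux of the argument: without it, as the $C(\bT)$ example preceding Lemma~\ref{lem: regular support regular} illustrates, a regular ideal can fail to be gauge-invariant, and both conclusions can break down.

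Once $J$ is known to be gauge-invariant, I would finish in two steps. First, $J$ is now a \emph{regular, gauge-invariant} ideal in $C^*(E)$, so Theorem~\ref{thm: regular quotient type L} applies directly and gives that $E/J$ satisfies Condition~(L); this is the first assertion. Second, gauge-invariance of $J$ alone is enough to invoke Proposition~\ref{prop: quotient graph}, which identifies the quotient as a graph algebra, $C^*(E)/J \simeq C^*(E/J)$; this is the second assertion.

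I do not expect a genuine obstacle. The only thing worth verifying is that the hypotheses chain cleanly: Proposition~\ref{prop: regular is gauge inv} requires nothing of $J$ beyond regularity (with $E$ of Condition~(L)), so its conclusion of gauge-invariance supplies exactly the ``regular, gauge-invariant'' hypothesis that Theorem~\ref{thm: regular quotient type L} demands, and the separate ``gauge-invariant'' hypothesis that Proposition~\ref{prop: quotient graph} demands. The proof is thus a short implication chain: regular $\Rightarrow$ gauge-invariant (using Condition~(L) on $E$), and then regular gauge-invariant $\Rightarrow$ the quotient has Condition~(L) and is isomorphic to $C^*(E/J)$.
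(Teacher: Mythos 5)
Your proposal is correct and matches the paper's own argument exactly: the paper derives this corollary by combining Proposition~\ref{prop: regular is gauge inv} (to get gauge-invariance of $J$ from regularity and Condition~(L)) with Theorem~\ref{thm: regular quotient type L} (for Condition~(L) on $E/J$) and Proposition~\ref{prop: quotient graph} (for the isomorphism $C^*(E)/J \simeq C^*(E/J)$). Nothing further is needed.
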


We end the paper with an example which shows that not all ideals $J$ with $E/J$ satisfying Condition~(L) are regular.

\begin{example} \label{ex1} Consider the following directed graph $E$\\
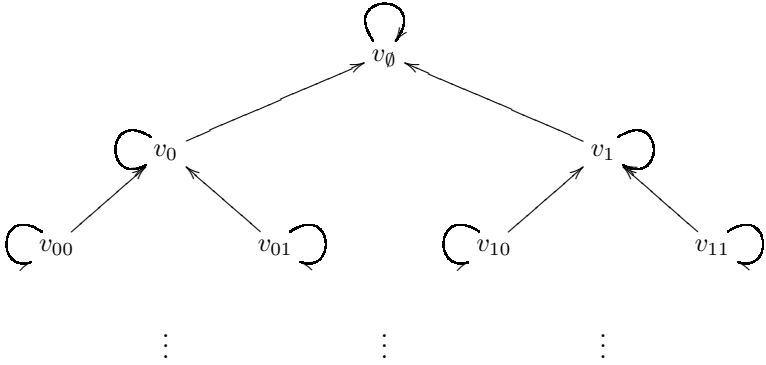
\begin{figure}[h!]
\xymatrix{
&&&v_\emptyset \ar@(ul,ur)&&&\\
& v_{0} \ar@(ul,dl)\ar[urr]&& & &  v_{1}\ar@(ur,dr)\ar[ull]&\\
v_{00}\ar@(ul,dl)\ar[ur]& & v_{01}\ar@(ur,dr)\ar[ul]& & v_{10}\ar@(ul,dl)\ar[ur]& & v_{11}\ar@(ur,dr)\ar[ul] \\
& \vdots & & \vdots & & \vdots &
}
\caption{Graph $E$, with non-regular quotients satisfying Condition~(L)}
\end{figure}

Denote the edge from $v_{j}$ to $v_i$ by $e_{(i,j)}$ and the edge
from $v_i$ to itself by $f_i$.  Since every vertex $v_i$ has an edge $f_i$ with $r(f_i)=s(f_i)=v_i$, every collection of vertices is saturated. 

Now consider the set of vertices $$H=T(v_0)\sqcup\left(\bigsqcup_{i\in \bN-\{0\}} T(v_{{\tiny{\underbrace{11\cdots 1}_{i}}0}})\right).$$  Then $H$ is a saturated hereditary set and $E^0\backslash H=\{v_{{\tiny{\underbrace{11\cdots 1}_{i}}}}:i\in \bN\}\cup \{v_\emptyset\}$.  

Then $E/J$ is represented by Figure~2 and satisfies Condition~(L)\\

\begin{figure}[h!]
\xymatrix{
v_\emptyset \ar@(ul,ur) & v_1\ar@(ul,ur) \ar[l] &  v_{11}\ar@(ul,ur) \ar[l] &\cdots
}
\caption{ $E/J$ with Condition~(L).}
\end{figure}
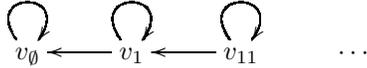

Notice that Proposition~\ref{prop: gauge inv reg descrip} shows $I(H)$
is not regular.  Indeed, the range of
$e_{({\tiny{\underbrace{11\cdots 1}_{i}}}, {\tiny{\underbrace{11\cdots
        1}_{i}}0})}$ is $v_{{\tiny{\underbrace{11\cdots 1}_{i}}}}$ and
its source is in $H$.  So, we have $\overline{H}(I(H))=E^0$, and
therefore \[H(I(H))\neq \{w: T(w)\subseteq \overline{H}(I(H))=E^0\}.\]
\end{example}

\providecommand{\bysame}{\leavevmode\hbox to3em{\hrulefill}\thinspace}
\providecommand{\MR}{\relax\ifhmode\unskip\space\fi MR }
\providecommand{\MRhref}[2]{%
  \href{http://www.ams.org/mathscinet-getitem?mr=#1}{#2}
}
\providecommand{\href}[2]{#2}


\end{document}